\UseRawInputEncoding
\documentclass[12pt]{article}
\usepackage[T1]{fontenc}
\usepackage[reqno,tbtags]{amsmath}
\usepackage{amssymb,amsthm}
\usepackage{xcolor}
\usepackage[margin=1.05in]{geometry}
\usepackage[colorlinks=true,linkcolor=black,citecolor=black,urlcolor=black]{hyperref}
\hypersetup{pdftitle={Neumann eigenvalues of isosceles triangles: monotonicity and asymptotics},pdfauthor={Guowei Dai, Yingxin Sun, Yong Zhang}}
\newtheorem{theorem}{Theorem}[section]
\newtheorem{lemma}[theorem]{Lemma}
\newtheorem{proposition}[theorem]{Proposition}
\newcommand{\dd}{\,\mathrm{d}}
\newcommand{\Dlt}{\Delta}
\newcommand{\Ex}{E_x}
\newcommand{\Ey}{E_y}
\newcommand{\WS}{\mathcal W}
\newcommand{\VA}{\mathcal V}
\newcommand{\norm}[1]{\left\lVert#1\right\rVert}
\newcommand{\abs}[1]{\left\lvert#1\right\rvert}
\newcommand{\rev}[1]{#1}
\numberwithin{equation}{section}

\begin{document}
\title{Neumann eigenvalues of isosceles triangles: monotonicity and asymptotics
\thanks{Research supported by NNSF of China (No. 12371110 and No. 12301133).}}
\author{
Guowei Dai\thanks{Corresponding author. School of Mathematical Sciences,
Dalian University of Technology, Dalian 116024, P.R. China.
E-mail: daiguowei@dlut.edu.cn.}
\and
Yingxin Sun\thanks{School of Mathematical Sciences,
Dalian University of Technology, Dalian 116024, P.R. China.
E-mail: sunyingxin2023@mail.dlut.edu.cn.}
\and
Yong Zhang\thanks{School of Mathematical Sciences,
Jiangsu University, Zhenjiang 212013, P.R. China.
E-mail: zhangyong@ujs.edu.cn.}}
\date{}
\maketitle

\begin{abstract}
We prove Laugesen and Siudeja's conjecture on Neumann eigenvalues of
isosceles triangles. After multiplication by the squared diameter, the
first positive symmetric eigenvalue increases strictly with the apex angle $\theta$, while the first antisymmetric eigenvalue decreases strictly up
to the equilateral triangle and increases strictly thereafter.
\rev{The proof combines a slice-average estimate of directional energies
with the explicit equilateral eigenfunction.} A quadratic
correction across slices also gives a two-term expansion for every
fixed positive spectral index $k$ as $\theta\downarrow0$, with relative
correction $\theta^2/6$ and remainder $O_k(\theta^4)$.
\end{abstract}

\medskip
\noindent\textit{Keywords.} Neumann eigenvalue; Isosceles triangle;
Monotonicity; Thin-domain asymptotics
\par\smallskip
\noindent\textit{2020 Mathematics Subject Classification.} 35P15; 35J25; 58J50

\section{Introduction and main results}

\quad\, We study how the Neumann eigenvalues of an isosceles triangle change
with its apex angle. The first positive eigenvalue
determines the optimal Poincar\'e constant and the lowest nonconstant
vibration frequency. Since dilation by $c>0$ multiplies an eigenvalue
by $c^{-2}$, the product $\mu D^2$, with $D$ the diameter, measures shape
independently of size. Laugesen and Siudeja established sharp Neumann
eigenvalue bounds for triangles~\cite{LSmin,LSmax}. In particular,
$\mu_2(T)D^2>j_{1,1}^2$ for every nondegenerate triangle, with equality
approached by thin isosceles triangles~\cite{LSmin}. Such an extremal bound
does not determine whether the eigenvalue increases along a family of
triangles. The present paper answers this question for the isosceles
family and quantifies the approach to its thin-triangle limit.
Here $j_{1,k}$ denotes the $k$th positive zero of the Bessel function $J_1$,
and the full Neumann spectrum is numbered, with multiplicities, as
$0=\mu_1<\mu_2\le\mu_3\le\cdots$.

Let $T_\theta$ be an isosceles triangle with apex angle
$\theta\in(0,\pi)$ between its equal sides, and let $D_\theta$ be its
diameter. All sides carry the Neumann condition. Reflection in the axis
splits the spectrum into symmetric and antisymmetric classes. Denote
their first positive eigenvalues by $\mu_S(T_\theta)$ and
$\mu_A(T_\theta)$, respectively. These are symmetry labels rather than
fixed positions in the full spectrum. The first positive eigenfunction
is symmetric for $\theta<\pi/3$ and antisymmetric for $\theta>\pi/3$;
at $\theta=\pi/3$, the eigenvalue has multiplicity two~\cite{LSmin}.
Motivated by numerical plots of both branches, Laugesen and Siudeja
formulated their diameter-normalized monotonicity as Conjecture 6.41
in Henrot's volume~\cite[p.~184]{LSchapter}. We prove the conjecture
with strict monotonicity.

\begin{theorem}\label{thm:main}
The function $\theta\mapsto\mu_S(T_\theta)D_\theta^2$ is strictly increasing
on $(0,\pi)$. The function $\theta\mapsto\mu_A(T_\theta)D_\theta^2$ is strictly
decreasing on $(0,\pi/3]$ and strictly increasing on $[\pi/3,\pi)$.
Its unique minimum is $16\pi^2/9$, attained at the equilateral triangle.
\end{theorem}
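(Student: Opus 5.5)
We would prove Theorem~\ref{thm:main} by linear transplantation between members of the isosceles family, reducing each monotonicity claim to an inequality between the two directional energies of an eigenfunction. Place $T_\theta$ with apex at the origin and axis along the positive $x$-axis, height $h$ and half-base $hs$, where $s=\tan(\theta/2)$. For an eigenfunction $u$ write
\[
\Ex=\int_{T_\theta}u_x^2,\qquad \Ey=\int_{T_\theta}u_y^2,\qquad N=\int_{T_\theta}u^2,
\]
so that $\mu=(\Ex+\Ey)/N$. The map $(x,y)\mapsto(x,ty)$ sends $T_\theta$ onto $T_\varphi$ with $\tan(\varphi/2)=ts$. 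It commutes with the reflection $y\mapsto-y$, so it preserves the symmetric and antisymmetric classes. It also preserves orthogonality to constants, since the Jacobian is constant. Hence $u$ transplanted to $T_\varphi$ is an admissible trial function in the same class, and
\[
\mu(T_\varphi)\le \frac{\Ex+t^{-2}\Ey}{N}.
\]
We will use this only as a one-sided bound, applied in both directions $t\gtrless1$. This avoids any simplicity or differentiability assumption on the branches. Strict monotonicity will then follow by comparing with the explicit change of $D^2$ and showing that the relevant inequality is strict.

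The diameter changes form at $\theta=\pi/3$. For $\theta\ge\pi/3$ we have $D_\theta^2=4h^2s^2$. Transplanting the eigenfunction of $T_\varphi$ back to $T_\theta$ with $\theta<\varphi$ (so $t=s_\theta/s_\varphi<1$) gives
\[
\mu(T_\theta)D_\theta^2\le \frac{t^2\Ex+\Ey}{N}\,4h^2s_\varphi^2 .
\]
This is strictly less than $\mu(T_\varphi)D_\varphi^2$ as soon as $\Ex>0$. Now $\Ex=0$ would force $u=u(y)$, and on a triangle with Neumann data this forces $u$ to be constant. So both branches increase strictly on $[\pi/3,\pi)$, with no energy estimate needed.

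For $\theta\le\pi/3$ the diameter is the equal side, so $D^2=h^2(1+s^2)$. The same transplantation in the two directions shows the following:
\begin{itemize}
\item The symmetric branch increases provided every symmetric first eigenfunction satisfies $\Ey<s^2\Ex$.
\item The antisymmetric branch decreases provided every antisymmetric first eigenfunction satisfies $\Ey>s^2\Ex$.
\end{itemize}
We use the eigenfunction of the larger triangle for the symmetric claim and that of the smaller one for the antisymmetric claim. The finite-step comparison then reduces to these strict inequalities via the elementary convexity of $t\mapsto(1+t^2s^2)(\Ex+t^{-2}\Ey)$. For the symmetric branch on $(\pi/3,\pi)$ we still need monotonicity, but the argument of the previous paragraph already covers it.

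The equality case $\Ey=\Ex/3$ at $s^2=1/3$ is checked directly on the explicit equilateral eigenfunctions (Lamé's $\cos$-products). This also gives the value $\mu D^2=16\pi^2/9$ for side $1$, and it makes $\theta=\pi/3$ the unique minimum of the antisymmetric branch.

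The main obstacle is the pair of strict directional-energy inequalities for $\theta<\pi/3$. This is where the \emph{slice-average estimate} enters, and we would attack it as follows.
\begin{itemize}
\item On each vertical slice $\{x\}\times(-xs,xs)$, decompose $u=\bar u(x)+w$, where $\bar u$ is the slice average. Then $\Ey=\int w_y^2$ involves only the oscillatory part.
\item In the symmetric case, $w$ is even with mean zero on a slice of half-width $xs$. Combine the one-dimensional Neumann Poincar\'e inequality with the identity from the radial vector field $x\partial_x+y\partial_y$ (a Rellich--Pohozaev identity). On a triangle with Neumann data the boundary terms vanish except on the base, and this yields a relation among $\Ex$, $\Ey$, $\mu N$ and base traces.
\item Use this to bound $\Ey$ by $s^2\Ex$ strictly.
\item In the antisymmetric case, $u$ vanishes on the axis. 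Use the opposite direction, a lower bound for $\Ey$ from the Dirichlet-type Poincar\'e inequality on each half-slice, together with the same identity.
\end{itemize}
If the pure slice argument is not sharp enough near $\pi/3$, we would close the gap by continuity from the exact equilateral computation. This means checking the sign of the difference $\Ey-s^2\Ex$ relative to its value at the equilateral triangle, where it is known explicitly.
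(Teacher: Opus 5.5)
Your reduction is correct and is essentially the paper's. The stretch $(x,y)\mapsto(x,ty)$ is the passage to the fixed reference triangle $\Dlt$ with form $\Ex+t\Ey$; your $x$ and $y$ play the roles of the paper's $y$ and $x$. Your argument for $\theta\ge\pi/3$ is Step~1. Your two criteria (transverse energy $<s^2\times$ axial energy for the symmetric branch, the reverse for the antisymmetric one) are the sign conditions on $e_y-e_x/t^2$ in \eqref{eq:derivative} with $t=s^2$. Convexity of $t\mapsto(1+t^2s^2)(\Ex+t^{-2}\Ey)$ only gives a comparison for steps small relative to the eigenfunction's energies, so you must globalize with continuity of $\mu D^2$; that part is routine.

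The genuine gap is that the symmetric inequality for $\theta<\pi/3$, which is the entire difficulty, is not proved, and the mechanism you name cannot prove it. The radial field $x\partial_x+y\partial_y$ is tangent to the equal sides, which is why its boundary terms vanish there. But as a dilation it treats both directions alike, so its identity is $\mu N=\frac h2\int_{\mathrm{base}}(\mu u^2-u_y^2)$ ($h$ the height). This constrains only the total $\Ex+\Ey=\mu N$, not how it splits between directions. An anisotropic field such as $x\partial_x$ reintroduces terms on the equal sides. The slice Poincar\'e inequality only bounds $\int w^2$ by the transverse energy.

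What is missing is Lemma~\ref{lem:slice}: test the eigenvalue equation with the oscillatory part $w$ itself. Because the slice length varies, the cross term with the slice average is a trace of $w$ on the equal sides. In the paper's notation on $\Dlt$ ($e_x$ transverse), it is controlled by $\abs{w(\ell,y)}^2\le\frac\ell3\int_0^\ell w_x^2\dd x$ and $\sqrt{E_\phi}\le\sqrt{e_y}+\sqrt{e_x/3}$. With an explicit bound such as $h_S(t)\le15t$ from the quartic trial function, this gives $(1-h_S/\pi^2-t/3)\sqrt{e_x}\le(t/\sqrt3)\sqrt{e_y}$. That yields the strict inequality only for $t=\tan^2(\theta/2)$ below about $0.23$.

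Your fallback for the remaining range rests on a false premise: there is no equality case at the equilateral triangle. The diameter formula has a kink at $\pi/3$, so the antisymmetric minimum is a corner, not a critical point. From the explicit symmetric mode ($\Ex(\phi_0)=\pi^2/6-81/64$, $\Ey(\phi_0)=\pi^2/2+243/64$ on $\Dlt$), its transverse-to-axial ratio on $T_{\pi/3}$ is $3\Ex(\phi_0)/\Ey(\phi_0)\approx0.13<1/3$. By the $120^\circ$ rotation invariance of the eigenspace, the antisymmetric mode has the reciprocal ratio, about $7.7$.

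Had equality held, continuity would fix no sign. Even with the correct strict value, soft continuity is not enough: directional energies of possibly multiple eigenfunctions need not vary continuously, and it would cover only an unquantified neighbourhood of $\pi/3$, not the whole interval from about $0.23$ to $1/3$. The paper bridges this quantitatively with Lemma~\ref{lem:direction}. Minimality alone makes the transverse energy of any minimizer nondecreasing in $t$, so $e_x\le e_0=\lambda_0/8-27/16$. With $h_S(t)\ge3th_0$ (from $(1-3t)\Ex\ge0$), this gives $th_S(t)/(1+t)\ge\lambda_0/33>e_0\ge e_x$ on $[2/9,1/3]$.

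For the antisymmetric branch, your Dirichlet--Neumann Poincar\'e idea is right. It must be paired with an explicit upper bound on the eigenvalue (the trial function linear across the axis, $h_A\le6$), not with the Rellich identity; that gives the uniform finite-step estimate of Step~4.
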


Together with the symmetry classification, Theorem~\ref{thm:main}
shows that $\mu_2(T_\theta)D_\theta^2$ increases strictly on the whole
interval $(0,\pi)$. It also describes each symmetry branch on the
range where that branch is not the first positive eigenvalue.
Thus it gives an ordering throughout the family, beyond the known
extremal bounds.

The main difficulty is the symmetric branch for $0<\theta<\pi/3$.
Mapping half of the triangle to a fixed reference triangle gives the
energy $E_x+tE_y$, where $t=\tan^2(\theta/2)$. For a normalized
minimizing eigenfunction, the derivative of the diameter-normalized
eigenvalue is $E_y-E_x/t^2$ at almost every $t$ in this range.
An upper bound for the eigenvalue alone cannot control this sign:
one needs a comparison of the two directional energies of the
minimizer itself.

Our main estimate provides this comparison through horizontal slice
averages. Differentiating the average produces a boundary term because
the slice length depends on height. Estimating that term gives
$E_x<t^2E_y$ for small $t$. A second variational comparison bounds the
horizontal energy by its explicit value at the equilateral triangle
and covers the remaining interval. The point $t=2/9$ is chosen in
the overlap of these estimates; $t=1/3$ is the geometric threshold
where the formula for the diameter changes. Section~\ref{sec:main-proof}
explains both choices. The method uses energy comparisons throughout,
so it does not require simplicity or differentiation of eigenfunctions
with respect to the angle.

\rev{The mixed-eigenvalue monotonicity of Gui, Hu, Li and
Zhang~\cite[Theorem 1.6]{GHLZ}, after rescaling their right triangles,
implies the decrease of the antisymmetric branch below the equilateral
angle. We give a short variational proof of this part and establish
the full diameter-normalized monotonicity in Theorem~\ref{thm:main}.}
\rev{Chen, Wu and Yao~\cite[Theorem 1.3]{CWY} prove that, on a class
of smooth symmetric domains, the first positive even Neumann
eigenvalue is simple in the even subspace and its eigenfunction is
monotone along the symmetry axis. Their Remark~5.3 also gives simplicity and spatial
monotonicity for the second Neumann eigenfunction on non-acute
triangles, including the right half-triangles used here. Spatial
monotonicity alone does not provide the quantitative comparison of
directional energies needed to control the variation of the
eigenvalue with the apex angle.}

The same slice decomposition also gives more precise information near
the degenerate triangle. The leading limit $j_{1,1}^2$ for the first
positive eigenvalue follows from Laugesen and Siudeja's thin-triangle
bounds~\cite{LSmin}; see also \cite[Proposition 6.42]{LSchapter}.
Henrot and Michetti~\cite[Lemma 3.5]{HM} establish convergence of each
fixed Neumann eigenvalue on thin planar domains to a one-dimensional
problem weighted by the width. For a triangle this weight is linear,
and the limiting problem is the radial Neumann equation on a disk.
We determine the next term and bound the remainder.

\begin{theorem}\label{thm:thin}
For each fixed integer $k\ge1$, as $\theta\downarrow0$,
\[
\mu_{k+1}(T_\theta)D_\theta^2
=j_{1,k}^2\left(1+\frac{\theta^2}{6}\right)+O_k(\theta^4).
\]
\rev{The implicit constant may depend on $k$.}
\end{theorem}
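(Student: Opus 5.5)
Throughout, take $\theta<\pi/3$. Then $D_\theta$ equals the common length of the equal sides, and we normalize $D_\theta=1$. Put $s=\tan(\theta/2)$, so $s^2=\theta^2/4+O(\theta^4)$. Place the apex at the origin with the axis along the $x$-axis:
\[
T_\theta=\{(x,y):0<x<h,\ |y|<sx\},\qquad h=\cos(\theta/2),\qquad h^{-2}=1+s^2 .
\]
The target becomes $\mu_{k+1}=j_{1,k}^2(1+\tfrac{2}{3}s^2)+O_k(s^4)$. The leading one-dimensional problem is the width-weighted problem $-(xf')'/x=\mu f$ on $(0,h)$ with $f'(h)=0$. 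Its eigenvalues are $j_{1,k}^2/h^2=j_{1,k}^2(1+s^2)$, with eigenfunctions $f_k(x)=J_0(j_{1,k}x/h)$ (with $\mu_1=0$ paired with $f=1$). So the plan is to show that transverse variation contributes an additional relative correction of exactly $-s^2/3$, up to $O_k(s^4)$. I will prove matching upper and lower bounds.

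\emph{Upper bound.} A pure function $f(x)$ violates the exact Neumann condition on the slanted sides to first order: there the normal derivative is proportional to $u_y-s u_x$. The quadratic correction across slices
\[
u=f(x)+\frac{y^2}{2x}f'(x)
\]
has $u_y=yf'/x=sf'$ on $y=\pm sx$, which repairs the boundary condition at leading order. I will insert the span of $u_j=f_j+\frac{y^2}{2x}f_j'$, $j\le k$, together with the constant, into the Rayleigh quotient. The slice integrals $\int_{-sx}^{sx}y^{2m}\,dy$ are explicit. Expanding numerator and denominator in $s^2$ then gives, for each $f_j$, a quotient equal to $j_{1,j}^2h^{-2}\bigl(1-\tfrac{s^2}{3}\bigr)+O(s^4)$. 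The coefficient $-1/3$ comes from combining the $s^2x^2$-weighted terms and integrating by parts with the ODE for $f_j$. The off-diagonal terms of the Gram and energy matrices are $O(s^2)$ and enter the eigenvalues only at order $s^4$. Min--max then gives the upper bound for $\mu_{k+1}$. Near $x=0$ the correction is harmless, since $f_j'(x)=O(x)$.

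\emph{Lower bound.} This is the main obstacle. Take an $L^2$-orthonormal family of eigenfunctions $u_1,\dots,u_{k+1}$. For each one, decompose into its slice average $\bar u(x)=\frac{1}{2sx}\int u\,dy$ and a remainder $v=u-\bar u$.

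1. The slice Poincar\'e inequality with constant $\pi^2/(4s^2x^2)$, combined with the a priori bound $\mu_{k+1}=O_k(1)$ from the upper bound, gives $\int v^2\lesssim s^2x^2\!\int u_y^2$. Hence $v$ is $O(s)$ in $L^2$.

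2. Differentiating $\bar u$ in $x$ produces the boundary term caused by the $x$-dependent slice length. This is the same mechanism as in the slice-average estimate of the main proof. It yields $\bar u'=\overline{u_x}+\frac{1}{2sx}\bigl[u(x,sx)+u(x,-sx)-2\bar u\bigr]$.

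3. To reach order $s^4$, I will refine the decomposition at second order. Write $v=\frac{y^2-s^2x^2/3}{2x}\bar u'+w$, where the subtracted mean keeps the first term average-free, and show that $w$ is $O(s^2)$ in energy. To control $w$, test the weak eigen-equation against slice-localized functions; elliptic regularity up to the sides (the triangle is convex, with angles bounded away from $\pi$ except at the apex, which is handled by the weight $x$) gives uniform $H^2$-type control of $u_y$ in rescaled variables.

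4. Substituting this decomposition into the energy gives $\int|\nabla u|^2\ge Q_s(\bar u)-O(s^4)$. Here $Q_s$ is the one-dimensional weighted form with the factor $1-\tfrac{s^2}{3}$. Apply min--max for $Q_s$ to the $(k+1)$-dimensional span of the slice averages $\bar u_j$. These remain nearly orthonormal, since the Gram defect is $O(s^2)$ and affects the eigenvalues only at order $s^4$. This yields $\mu_{k+1}\ge j_{1,k}^2h^{-2}(1-\tfrac{s^2}{3})-O_k(s^4)$.

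Combining the two bounds with $h^{-2}=1+s^2$ and $s^2=\theta^2/4+O(\theta^4)$ gives $j_{1,k}^2(1+\theta^2/6)+O_k(\theta^4)$. I expect the only delicate point to be the uniform $O(s^4)$ control of the second-order remainder $w$ near the apex, where the slices degenerate. I will handle it by working in the reference variables $(x,\eta)$ with $y=sx\eta$, in which the weight $x$ is explicit.
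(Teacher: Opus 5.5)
Your upper bound is essentially the paper's. Your $\frac{y^2}{2x}f'(x)$ is the paper's corrector $t\chi_F$ without the slice-mean term $-\frac{s^2}{6}xf'(x)$. The diagonal quotients do come out as $j_{1,j}^2h^{-2}(1-s^2/3)+O(s^4)$, because the $O(s^2)$ mass shift this omission creates cancels through $\int_0^h xf_j^2\,dx=\frac{h^2}{2}f_j(h)^2$. The paper's mean-free corrector avoids the matrix perturbation: it gives the exact identity $\Ex(v_F)+t\Ey(v_F)=(t-t^2/3)E_F+t^3\Ey(\chi_F)$ with $M(v_F)\ge M_F$ on the whole span.

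The gap is in the lower bound. Step~3 claims the second-order remainder $w$ is $O(s^2)$ in energy, via uniform $H^2$-type regularity on triangles that degenerate at the apex. This is asserted, not proved, and the rest of the lower bound depends on it. Without it your bookkeeping does not close. Step~1 only gives $\|v\|_{L^2}=O(s)\|u\|$, so the \emph{diagonal} Gram entries of the slice averages are $1-O(s^2)$. A diagonal mass defect moves the min--max bound at first order, i.e.\ by $O(s^2)$, which is exactly the order of the correction you want. Your remark that $O(s^2)$ Gram defects enter only at order $s^4$ holds for off-diagonal entries only. Also, the correct formula is $\bar u'=\overline{u_x}+\frac1{2x}[u(x,sx)+u(x,-sx)-2\bar u]$; the factor is $1/(2x)$, not $1/(2sx)$.

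The missing idea is that the first-order decomposition $u=\bar u+v$ already gives the sharp coefficient. It needs no regularity and no smallness of $v$ beyond a Poincar\'e inequality.
\begin{itemize}
\item On each slice, $\int_{-sx}^{sx}yv_y\,dy=sx[v(x,sx)+v(x,-sx)]$. Cauchy--Schwarz then gives $|v(x,sx)+v(x,-sx)|^2\le\frac{2sx}{3}\int v_y^2\,dy$, with equality exactly for multiples of your profile $y^2-s^2x^2/3$.
\item Since $\int v_x\,dy=-s[v(x,sx)+v(x,-sx)]$, this yields $2\int\bar u'v_x\ge-2s\sqrt{E/3}\,\|v_y\|$, where $E=\int|\nabla\bar u|^2$.
\item The slice Poincar\'e inequality gives $\mu\|v\|^2\le\epsilon\|v_y\|^2$ with $\epsilon=4\mu s^2h^2/\pi^2$, so this term is absorbed.
\item Dropping $\int v_x^2$ and completing the square in $\|v_y\|$ gives
\[
\int|\nabla u|^2-\mu\int u^2\ge\Bigl(1-\frac{s^2}{3(1-\epsilon)}\Bigr)E-\mu\int\bar u^2 .
\]
\end{itemize}
On the span of the first $k+1$ eigenfunctions, the left side is $\le0$ for $\mu=\mu_{k+1}$. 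Slice averaging is injective there, because $\mu_{k+1}=O_k(1)<\pi^2/(4s^2h^2)$. One-dimensional min--max then gives $\mu_{k+1}\ge j_{1,k}^2h^{-2}\bigl(1-\frac{s^2}{3}\bigr)-O_k(s^4)$ directly.

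This is the paper's mechanism: Lemma~\ref{lem:slice} and the lower bound in Proposition~\ref{prop:thin-bounds}. The paper works on the half-triangle and removes the antisymmetric class separately with the Dirichlet--Neumann Poincar\'e bound. Your full slices would handle both symmetry classes at once.
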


The relative quadratic correction is the same for every fixed spectral
index. To obtain it, we add a quadratic function of mean zero on each
slice, chosen by minimizing the next term in the energy. This gives an
upper bound that matches the lower bound from slice projection at the
first correction term. The explicit bounds in Proposition~\ref{prop:thin-bounds}
control the remainder as well as the coefficient. For $k=1$, the expansion
quantifies the rate at which thin isosceles triangles approach the sharp
lower bound $j_{1,1}^2$.

The paper is organized as follows. Section~\ref{sec:reference} formulates
the variational problem on a fixed triangle and explains the diameter
normalization. Section~\ref{sec:main-proof} proves Theorem~\ref{thm:main}
using slice estimates and directional energy comparisons.
Section~\ref{sec:thin} derives the thin-angle bounds and proves
Theorem~\ref{thm:thin}. Appendix~\ref{app:integrals} collects the
equilateral integration formulas used in the proof.

\section{A fixed reference triangle}\label{sec:reference}

\quad\, Normalize the equal sides to length one and choose coordinates so that
\[
T_\theta=\operatorname{conv}\left\{
\left(-\sin\frac\theta2,0\right),
\left(\sin\frac\theta2,0\right),
\left(0,\cos\frac\theta2\right)\right\},
\,\,\, t=\tan^2\frac\theta2.
\]
The map $(x,y)\mapsto\bigl(\sin(\theta/2)x,\cos(\theta/2)y\bigr)$ sends
\[
\Dlt=\{(x,y):x>0,\ y>0,\ x+y<1\}
\]
onto the right half of $T_\theta$. For real $v\in H^1(\Dlt)$, set
\[
M(v)=\int_\Dlt v^2\dd x\dd y,\,\,\,
\Ex(v)=\int_\Dlt v_x^2\dd x\dd y,\,\,\,
\Ey(v)=\int_\Dlt v_y^2\dd x\dd y.
\]
The fixed form domains for the two symmetry classes are
\[
\WS=\left\{v\in H^1(\Dlt):\int_\Dlt v\dd x\dd y=0\right\},
\,\,\,
\VA=\left\{v\in H^1(\Dlt):v\big|_{x=0}=0\right\}.
\]
The mean-zero condition removes the constant symmetric eigenfunction.
Antisymmetry gives a zero trace on the axis $x=0$; the other edges retain
their natural Neumann conditions. Define
\begin{equation}\label{eq:forms}
h_S(t)=\min_{v\in\WS\setminus\{0\}}
\frac{\Ex(v)+t\Ey(v)}{M(v)},\,\,\,
h_A(t)=\min_{v\in\VA\setminus\{0\}}
\frac{\Ex(v)+t\Ey(v)}{M(v)}.
\end{equation}
Poincar\'e inequalities and the compact embedding of $H^1(\Dlt)$ into
$L^2(\Dlt)$ give positive minima, attained by functions with $M(v)=1$.
Such a minimizer satisfies
\begin{equation}\label{eq:weak}
\int_\Dlt\bigl(v_x\eta_x+t v_y\eta_y\bigr)\dd x\dd y
=h(t)\int_\Dlt v\eta\dd x\dd y
\end{equation}
for every $\eta$ in the corresponding form domain.

Set $a=\sin(\theta/2)$ and $b=\cos(\theta/2)$, so that
$t=a^2/b^2$. Under the change of variables
$(X,Y)=(ax,by)$, write $u(ax,by)=v(x,y)$. On the right half
$T_\theta^+$ of the triangle, the Rayleigh quotient becomes
\[
\frac{\displaystyle\int_{T_\theta^+}
\left|\nabla u\right|^2\,\mathrm{d}X\,\mathrm{d}Y}
{\displaystyle\int_{T_\theta^+}u^2\,\mathrm{d}X\,\mathrm{d}Y}
=
\frac{a^{-2}E_x(v)+b^{-2}E_y(v)}{M(v)}
=
\frac{1+t}{t}\,
\frac{E_x(v)+tE_y(v)}{M(v)}.
\]
Taking the infimum over the corresponding symmetric and
antisymmetric variational spaces gives
\begin{equation}\label{eq:scale}
\mu_S(T_\theta)=\frac{1+t}{t}h_S(t),\,\,\,
\mu_A(T_\theta)=\frac{1+t}{t}h_A(t).
\end{equation}
The diameter is the larger of an equal side and the base. Thus
\[
D_\theta^2=\max\left\{1,\frac{4t}{1+t}\right\},\,\,\,
\mu(T_\theta)D_\theta^2=
\begin{cases}
\dfrac{1+t}{t}h(t),&0<t\le1/3,\\[4pt]
4h(t),&t\ge1/3,
\end{cases}
\]
where $h$ can stand for either $h_S$ or $h_A$. The expressions agree
at $t=1/3$.

Each function in \eqref{eq:forms} is the infimum of affine functions of
$t$, and is therefore concave and locally Lipschitz on $(0,+\infty)$.
If $v$ is any normalized minimizer at $t$, with
$e_x=\Ex(v)$ and $e_y=\Ey(v)$, the trial function $v$ gives
\[
h(s)\le e_x+s e_y=h(t)+(s-t)e_y.
\]
At a differentiability point of $h$, taking difference quotients from both
sides shows that $h'(t)=e_y$ for every normalized minimizer at that point.
Consequently, for $F(t)=(1+t)h(t)/t$,
\begin{equation}\label{eq:derivative}
F'(t)=-\frac{h(t)}{t^2}+\frac{1+t}{t}e_y
=e_y-\frac{e_x}{t^2}
\quad\text{for almost every }t>0.
\end{equation}
This gives the derivative needed below even if an eigenvalue is multiple.

\section{\texorpdfstring{Proof of Theorem~\ref{thm:main}}{Proof of Theorem 1.1}}\label{sec:main-proof}

\quad\, The distinction between $t<1/3$ and $t>1/3$ comes from the diameter
normalization. Both reference eigenvalues $h_S(t)$ and $h_A(t)$ increase
strictly for all $t>0$, as shown below. For $t\ge1/3$, the quantities in
Theorem~\ref{thm:main} are $4h_S(t)$ and $4h_A(t)$, so the same argument
settles both classes. For $t<1/3$, they are instead
\[
F_S(t)=\frac{1+t}{t}h_S(t),\,\,\,
F_A(t)=\frac{1+t}{t}h_A(t).
\]
The factor $(1+t)/t$ decreases, so increase of $h_S$ or $h_A$ alone
does not determine the sign of the normalized derivative.
By \eqref{eq:derivative}, the symmetric class requires $e_x<t^2e_y$,
whereas the antisymmetric class requires the reverse inequality.
We estimate the symmetric class by slice averages and equilateral data.
For the antisymmetric class, the zero trace on the axis supplies a
lower bound for $e_x$ through a Dirichlet--Neumann Poincar\'e inequality.

\subsection{Two energy estimates}

\quad\, The slice argument below uses the symmetric form domain $\WS$.
Subtracting a horizontal slice average preserves the zero mean on
$\Dlt$. It need not preserve a zero trace on $x=0$, so the resulting
remainder is not generally an admissible test function in $\VA$.
This is why that argument is used only for the symmetric eigenvalue.

\begin{lemma}\label{lem:slice}
Let $v$ be a normalized minimizer for $h=h_S(t)$, and put
$e_x=\Ex(v)$ and $e_y=\Ey(v)$. Then
\begin{equation}\label{eq:key}
\left(1-\frac{h}{\pi^2}-\frac t3\right)\sqrt{e_x}
\le\frac t{\sqrt3}\sqrt{e_y}.
\end{equation}
\end{lemma}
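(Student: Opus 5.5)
The plan is to test the weak equation \eqref{eq:weak} against the part of $v$ that oscillates along horizontal slices. For $0<y<1$ put $L=L(y)=1-y$. Define the slice average
\[
w(y)=\frac1L\int_0^{L}v(x,y)\dd x,
\]
and the remainder $r=v-w$. Then $\int_0^L r\dd x=0$ on every slice, so $r$ has mean zero on $\Dlt$. Hence $r\in\WS$, provided $r\in H^1(\Dlt)$. Since $r_x=v_x$, inserting $\eta=r$ into \eqref{eq:weak} and using $\int_\Dlt wr=0$ gives
\[
e_x+t\int_\Dlt v_y r_y\dd x\dd y=h\int_\Dlt r^2\dd x\dd y .
\]

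\emph{Step 1 (right-hand side).} On each slice, the Neumann Poincar\'e inequality on an interval of length $L\le1$ gives
\[
\int_0^L r^2\dd x\le \frac{L^2}{\pi^2}\int_0^L v_x^2\dd x .
\]
Hence $h\int r^2\le (h/\pi^2)e_x$.

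\emph{Step 2 (differentiating the slice average).} Because the upper endpoint of the slice moves with $y$, differentiation produces a boundary term:
\[
w'(y)=\overline{v_y}(y)+\frac{w(y)-v(L,y)}{L},
\qquad \overline{v_y}=\frac1L\int_0^L v_y\dd x .
\]
Integrating by parts gives $\int_0^L x v_x\dd x=L\bigl(v(L,y)-w\bigr)$. Therefore
\[
w'=\overline{v_y}-g,\qquad g(y)=\frac1{L^2}\int_0^L x\,v_x\dd x .
\]
Cauchy--Schwarz with $\int_0^L x^2\dd x=L^3/3$ yields
\[
L\,g^2\le \frac13\int_0^L v_x^2\dd x,
\qquad\text{so}\qquad \int_0^1 L g^2\dd y\le e_x/3 .
\]

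\emph{Step 3 (the mixed term).} Write $r_y=(v_y-\overline{v_y})+g$. Then
\[
\int_\Dlt v_y r_y=\int_\Dlt (v_y-\overline{v_y})^2+\int_\Dlt v_y\,g ,
\]
and the first term is nonnegative. Using Cauchy--Schwarz on the last term together with Step 2,
\[
-t\int_\Dlt v_yr_y\le t\sqrt{e_y}\Bigl(\int_0^1 Lg^2\dd y\Bigr)^{1/2}\le \frac{t}{\sqrt3}\sqrt{e_xe_y}.
\]
Combining this with Step 1 gives $(1-h/\pi^2)e_x\le (t/\sqrt3)\sqrt{e_xe_y}$. Dividing by $\sqrt{e_x}$ (the case $e_x=0$ is trivial) proves \eqref{eq:key}, even without the $-t/3$ term. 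The statement follows a fortiori, since subtracting $t/3$ only weakens the left side and \eqref{eq:key} is trivial when its left side is negative. I would keep the $-t/3$ slack available in case the rigorous version of Steps 2--3 loses a term of size $(t/3)\sqrt{e_x}$ when the boundary term is handled less sharply.

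\emph{Main obstacle: admissibility of $r$.} The difficult point is to justify that $r\in H^1(\Dlt)$ and that the slice calculus is valid near the apex $y=1$, where $1/L$ blows up. I would first carry out Steps 2--3 for $v$ in $C^1(\overline\Dlt)$, which is dense in $H^1(\Dlt)$ because $\Dlt$ is Lipschitz. The computation shows
\[
\int_\Dlt r_y^2\le 2\int_\Dlt (v_y-\overline{v_y})^2+2\int_0^1 Lg^2\dd y\le 2e_y+\tfrac23e_x .
\]
Together with $r_x=v_x$ and Step 1 for $\int r^2$, this gives $\norm{r}_{H^1}\le C\norm{v}_{H^1}$. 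The map $v\mapsto r$ is therefore bounded on $H^1(\Dlt)$ and extends by density, so $r\in\WS$ for the minimizer. Each inequality used above then passes to the limit.
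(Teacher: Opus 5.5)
Your proof is correct. It actually establishes the stronger inequality $(1-h/\pi^2)\sqrt{e_x}\le (t/\sqrt3)\sqrt{e_y}$, so the $-t/3$ slack is never needed. The density step loses nothing: the identities $w'=\overline{v_y}-g$ and $\int_\Dlt v_yr_y=\int_\Dlt(v_y-\overline{v_y})^2+\int_\Dlt v_yg$ pass to $H^1$ limits, because your Step~2 and Jensen's inequality show that $v\mapsto g$ and $v\mapsto\overline{v_y}$ are bounded into $L^2(\Dlt)$.

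The framework is the paper's: test \eqref{eq:weak} with the slice remainder, use the slice Poincar\'e inequality, and control the moving-endpoint term by Cauchy--Schwarz against $x$. The difference is how you split the mixed term. In your notation, the paper writes $\int_\Dlt v_yr_y=\Ey(r)+\int_0^1 w'(y)\,r(L,y)\dd y$ and discards all of $\Ey(r)$. It then needs the weighted triangle inequality \eqref{eq:projection} for $\int_0^1 L\abs{w'}^2\dd y$, and that step is where the extra $(t/3)\sqrt{e_x}$ appears. Since
\[
\Ey(r)=\int_\Dlt(v_y-\overline{v_y})^2+\int_0^1 Lg^2\dd y
\]
and the boundary term equals $\int_\Dlt v_yg-\int_0^1Lg^2\dd y$, the paper throws away a positive piece that exactly cancels the negative part of the boundary term, and then has to pay for that negative part separately. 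Your splitting $r_y=(v_y-\overline{v_y})+g$ keeps this cancellation and drops only the slice variance of $v_y$.

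What your route buys:
\begin{itemize}
\item It is shorter, since no $E_\phi$ is needed.
\item It matches, to leading order, the behaviour $\Ex\approx t^2\Ey/3$ of the corrected trial functions $F(r)+t\chi_F$ in Section~\ref{sec:thin}.
\item It would widen the range in Step~2 of the proof of Theorem~\ref{thm:main} from $\tau_s\approx0.228$ to $(1-1/\sqrt3)\pi^2/15\approx0.278$. This is still below $1/3$, so Step~3 remains necessary.
\end{itemize}

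What the paper's route buys is reuse: its bookkeeping through the slice average and its weighted energy $E_\phi$ is the form used again in the lower bound of Proposition~\ref{prop:thin-bounds}, where min--max is applied to the slice averages.
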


\begin{proof}
We first derive the slice identities for smooth functions and then
extend them to $H^1(\Dlt)$ before testing the eigenvalue equation.
Write $\ell(y)=1-y$ and decompose $v$ into its slice average and remainder:
\begin{equation}\label{eq:decomp}
\phi(y)=\frac1{\ell(y)}\int_0^{\ell(y)}v(x,y)\dd x,
\,\,\, w(x,y)=v(x,y)-\phi(y).
\end{equation}
On each horizontal slice, $\int_0^\ell w\dd x=0$ and $w_x=v_x$.
The Neumann Poincar\'e inequality on $(0,\ell)$ gives
\begin{equation}\label{eq:pw}
\norm{w}_{L^2(\Dlt)}^2
\le\int_0^1\frac{\ell^2}{\pi^2}
       \int_0^\ell w_x^2\dd x\dd y
\le\frac{e_x}{\pi^2}.
\end{equation}
Integration by parts on a slice gives
\[
\int_0^\ell xw_x\dd x
=\ell w(\ell,y)-\int_0^\ell w\dd x=\ell w(\ell,y).
\]
Since $\int_0^\ell x^2\dd x=\ell^3/3$, Cauchy--Schwarz yields
\begin{equation}\label{eq:trace}
\abs{w(\ell,y)}^2\le\frac\ell3\int_0^\ell w_x^2\dd x,
\,\,\,
\int_0^1\frac{\abs{w(\ell,y)}^2}{\ell}\dd y\le\frac{e_x}{3}.
\end{equation}
The slice endpoint moves with $y$, so differentiating an average
requires its boundary term. Differentiating $\int_0^\ell w\dd x=0$
and $\ell\phi=\int_0^\ell v\dd x$, with $\ell'=-1$, gives
\begin{equation}\label{eq:moving}
\int_0^\ell w_y\dd x=w(\ell,y),\,\,\,
\ell\phi'=\int_0^\ell v_y\dd x-w(\ell,y).
\end{equation}

To extend these calculations to $H^1$, let $Pv=\phi$ denote the
slice-average map. Jensen's inequality gives $M(Pv)\le M(v)$.
The second identity in \eqref{eq:moving}, the inequality
$\abs{a-b}^2\le2\abs a^2+2\abs b^2$, and \eqref{eq:trace} give
$\Ey(Pv)\le2e_y+2e_x/3$ for smooth $v$; also $\Ex(Pv)=0$.
Thus $P$ is bounded in $H^1$ and extends by smooth density to
$H^1(\Dlt)$. Applying \eqref{eq:trace} to differences of smooth
approximations gives convergence of the endpoint terms in
$L^2\bigl((0,1),\mathrm{d}y/\ell\bigr)$. Hence
\eqref{eq:pw}--\eqref{eq:moving} remain valid for $H^1$ functions,
including for slices whose lengths tend to zero.
For $E_\phi=\int_0^1\ell\abs{\phi'}^2\dd y$, the triangle inequality
in this weighted space, followed by Cauchy--Schwarz on each slice
and \eqref{eq:trace}, now gives the sharper bound
\begin{equation}\label{eq:projection}
\begin{split}
\sqrt{E_\phi}
&\le\left(\int_0^1\frac1\ell
  \abs{\int_0^\ell v_y\dd x}^2\dd y\right)^{1/2}
+\left(\int_0^1\frac{\abs{w(\ell,y)}^2}{\ell}\dd y\right)^{1/2}\\
&\le\sqrt{e_y}+\sqrt{e_x/3}.
\end{split}
\end{equation}

For the minimizing eigenfunction, both $\phi$ and $w$ have mean zero
on $\Dlt$, and slice orthogonality gives
$\int_\Dlt vw\dd x\dd y=\norm{w}_{L^2(\Dlt)}^2$.
Test \eqref{eq:weak} with $w$ and use \eqref{eq:moving}:
\[
e_x+t\int_\Dlt w_y^2\dd x\dd y
+t\int_0^1\phi'(y)w(\ell,y)\dd y
=h\norm{w}_{L^2(\Dlt)}^2.
\]
Dropping the nonnegative integral of $w_y^2$ and applying
\eqref{eq:pw} and \eqref{eq:trace} gives
\[
\left(1-\frac h{\pi^2}\right)e_x
\le t\left(\int_0^1\ell\abs{\phi'}^2\dd y\right)^{1/2}
       \left(\int_0^1\frac{\abs{w(\ell,y)}^2}{\ell}\dd y\right)^{1/2}
\le t\sqrt{E_\phi e_x/3}.
\]
If $e_x>0$, divide by $\sqrt{e_x}$ and apply \eqref{eq:projection}
to obtain \eqref{eq:key}. If $e_x=0$, it is immediate.
\end{proof}

Increasing $t$ makes vertical variation more expensive in the form
$\Ex+t\Ey$. The following comparison describes how the minimizing
energies change and will allow us to use the equilateral eigenfunction.

\begin{lemma}\label{lem:direction}
Let $0<t_1<t_2$, and choose any normalized minimizers of $h_S(t_1)$
and $h_S(t_2)$, with energies $e_{x,i}$ and $e_{y,i}$. Then
$e_{y,2}\le e_{y,1}$ and $e_{x,1}\le e_{x,2}$.
\end{lemma}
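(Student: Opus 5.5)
This is the standard monotonicity argument for minimizers of a family of affine functionals. I will cross-test the two minimizers in each other's Rayleigh quotients and add the resulting inequalities.

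Both $v_1$ and $v_2$ lie in the same form domain $\WS$, which does not depend on $t$, and both are normalized with $M(v_i)=1$. Using $v_2$ as a trial function for $h_S(t_1)$, and $v_1$ as a trial function for $h_S(t_2)$, gives
\[
e_{x,1}+t_1e_{y,1}\le e_{x,2}+t_1e_{y,2},\qquad
e_{x,2}+t_2e_{y,2}\le e_{x,1}+t_2e_{y,1}.
\]
Adding the two inequalities cancels the $x$-energies. What remains is $(t_2-t_1)e_{y,2}\le(t_2-t_1)e_{y,1}$, and since $t_2>t_1$ this gives $e_{y,2}\le e_{y,1}$.

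For the horizontal energies, rewrite the first inequality as
\[
e_{x,1}-e_{x,2}\le t_1\,(e_{y,2}-e_{y,1}).
\]
The right side is at most $0$ by the vertical comparison just proved. Hence $e_{x,1}\le e_{x,2}$.

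No compactness, simplicity, or regularity input is needed, since the argument uses only the minimality of each $v_i$ for its own quotient. The only point to check is that the normalizations agree, so that the denominators $M(v_i)=1$ drop out of both quotients. There is no real obstacle. The one care point is deriving the $e_x$ inequality from the correct one of the two inequalities, the one whose $e_y$ coefficient is $t_1$, so that the sign of $e_{y,2}-e_{y,1}$ works in our favour.
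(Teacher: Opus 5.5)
Your proposal is correct and follows essentially the same route as the paper: cross-testing the two normalized minimizers in the fixed space $\WS$ gives $e_{y,2}\le e_{y,1}$ (the paper writes this as $e_{y,2}\le (h_2-h_1)/(t_2-t_1)\le e_{y,1}$). The $t_1$-trial inequality together with $t_1>0$ then gives $e_{x,1}\le e_{x,2}$, exactly as in the paper.
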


\begin{proof}
Let $v_i\in\WS$ be the chosen minimizer at $t_i$, with $M(v_i)=1$,
and write
\[
h_i=h_S(t_i)=e_{x,i}+t_i e_{y,i},\,\,\, i=1,2.
\]
The form domain $\WS$ does not depend on $t$. Thus $v_2$ is an
admissible trial function at $t_1$, and its Rayleigh quotient gives
\[
h_1\le e_{x,2}+t_1e_{y,2}
=h_2-(t_2-t_1)e_{y,2}.
\]
Similarly, using $v_1$ as a trial function at $t_2$ gives
\[
h_2\le e_{x,1}+t_2e_{y,1}
=h_1+(t_2-t_1)e_{y,1}.
\]
Rearranging these inequalities and dividing by the positive difference
$t_2-t_1$ gives
\[
e_{y,2}\le\frac{h_2-h_1}{t_2-t_1}\le e_{y,1},
\]
which proves the claimed comparison of vertical energies.
To compare the horizontal energies, return to the first trial inequality
and substitute $h_1=e_{x,1}+t_1e_{y,1}$:
\[
e_{x,1}-e_{x,2}\le t_1(e_{y,2}-e_{y,1})\le0.
\]
The last inequality uses $t_1>0$ and the vertical-energy comparison
just proved. Hence $e_{x,1}\le e_{x,2}$.
Only minimality and normalization were used, so the argument applies
to any choice of minimizers.
\end{proof}

\subsection{The monotonicity argument}

\quad\, Lemma~\ref{lem:slice} controls the horizontal energy for small $t$,
while Lemma~\ref{lem:direction} allows comparison with the equilateral
eigenfunction for the remaining symmetric range. We now combine these
estimates with the diameter normalization and a Poincar\'e bound for
the antisymmetric class.

\begin{proof}[Proof of Theorem~\ref{thm:main}]
\textbf{Step 1: both symmetry classes for $t\ge1/3$.}
For any $t>0$, a normalized minimizer in either class has $\Ey(v)>0$.
To prove this, suppose $v_y=0$, so that $v=f(x)$.
In the symmetric case, \eqref{eq:weak} holds for all $H^1$ test functions:
subtracting their mean changes neither side because $v$ has mean zero.
Testing with a function of $y$ and using $h>0$ gives
\[
\int_0^{1-y}f(x)\dd x=0\quad\text{for almost every }y\in(0,1).
\]
Differentiating this indefinite integral gives $f=0$, a contradiction.
In the antisymmetric case, $f(0)=0$.
For $\zeta\in C_c^\infty(0,1)$, take
$\eta(x,y)=x\zeta(y)\in\VA$ in \eqref{eq:weak}.
Since $\eta_x=\zeta(y)$, $\eta_y=x\zeta'(y)$, and $v_y=0$, we obtain
\[
\int_0^1\zeta(y)\int_0^{1-y}f'(x)\dd x\dd y
=h\int_0^1\zeta(y)\int_0^{1-y}xf(x)\dd x\dd y.
\]
The inner integral on the left is $f(1-y)-f(0)=f(1-y)$.
As this holds for every $\zeta$, we obtain, with $\ell=1-y$,
\[
f(\ell)=h\int_0^\ell xf(x)\dd x
\quad\text{for almost every }\ell\in(0,1).
\]
On every compact subinterval of $[0,1)$, the function $f$ has ordinary
$H^1$ regularity. Differentiating there gives $f'(\ell)=h\ell f(\ell)$.
Together with $f(0)=0$, this implies $f=0$ throughout $(0,1)$,
again contradicting normalization.

Now let $v$ be a normalized minimizer at $t_2>t_1>0$.
Since $\Ey(v)>0$, its Rayleigh quotient at $t_1$ gives
\[
h(t_1)\le h(t_2)-(t_2-t_1)\Ey(v)<h(t_2).
\]
Thus $h_S$ and $h_A$ both increase strictly on $(0,+\infty)$.
For $t\ge1/3$, the normalized eigenvalues are $4h_S(t)$ and $4h_A(t)$,
so both increase strictly in this range.

\textbf{Step 2: the symmetric class for $0<t\le2/9$.}
We need $e_x<t^2e_y$ to apply \eqref{eq:derivative}.
Functions independent of $x$ are natural trial functions for small $t$.
\rev{Set $r=1-y$ and write $\psi(y)=p(r)$, where $p$ is an even
polynomial with zero weighted mean. The conditions $p'(0)=p'(1)=0$
are not required for a Neumann trial function, but match the radial
modes in Section~\ref{sec:thin}. A nonconstant quadratic cannot satisfy
both conditions, so we take $p(r)=r^4+ar^2+b$.
Then $p'(1)=0$ gives $a=-2$, and $\int_0^1rp(r)\dd r=0$ gives
$b=2/3$. Thus}
\[
\rev{\psi(y)=p(1-y)=r^4-2r^2+\frac23.}
\]
The relevant integrals are
\begin{align*}
\int_\Dlt\psi\dd x\dd y
&=\int_0^1r\left(r^4-2r^2+\frac23\right)\dd r
=\frac16-\frac12+\frac13=0,\\
M(\psi)&=\int_0^1r\left(r^4-2r^2+\frac23\right)^2\dd r
=\frac2{45},\\
\Ey(\psi)&=16\int_0^1r^3\left(r^2-1\right)^2\dd r=\frac23,
\,\,\, \Ex(\psi)=0.
\end{align*}
Thus $h_S(t)\le15t$.
\rev{The choice is not unique. Within the family $r^6+ar^2+b$, the
same conditions give $p_6(r)=r^6-3r^2+5/4$, with weighted Rayleigh
quotient $3/(213/1120)=1120/71>15$. This weaker estimate does not
cover the chosen endpoint $t=2/9$ in the comparison below.
The quartic provides a simple bound that does.}

By Lemma~\ref{lem:slice}, the desired energy
comparison follows whenever
\[
1-\left(\frac{15}{\pi^2}+\frac13\right)t>\frac1{\sqrt3},
\,\,\,\text{or equivalently}\,\,\,
t<\tau_s:=\frac{1-1/\sqrt3}{15/\pi^2+1/3}.
\]
We choose $2/9$ as an endpoint within this range. Indeed,
$\pi^2>49/5$, $25/27-50/147-7/12=13/5292>0$, and $49/144>1/3$.
Hence, for $0<t\le2/9$,
\begin{equation}\label{eq:coefficient}
1-\frac{h_S(t)}{\pi^2}-\frac t3
\ge\frac{25}{27}-\frac{10}{3\pi^2}
>\frac7{12}>\frac1{\sqrt3}.
\end{equation}
By Step 1, $e_y>0$. If $e_x>0$, substituting this coefficient into
\eqref{eq:key} and squaring gives
\begin{equation}\label{eq:small-energy}
e_x<\frac{48}{49}t^2e_y<t^2e_y.
\end{equation}
The same strict inequalities hold immediately if $e_x=0$.

\textbf{Step 3: the symmetric class for $2/9\le t\le1/3$.}
Set
\[
\lambda_0=\frac{16\pi^2}{9},\,\,\,
h_0=h_S(1/3)=\frac{\lambda_0}{4}.
\]
The explicit equilateral spectrum gives a two-dimensional first positive
eigenspace, with a one-dimensional symmetric subspace. The eigenfunction
and its directional energies below are the data in
\cite[Section 4]{LSmin}, expressed on $\Dlt$. A symmetric eigenfunction is
\begin{equation}\label{eq:equilateral}
\phi_0(x,y)=\cos\left(\frac{2\pi x}{3}\right)
-2\cos\left(\frac{\pi x}{3}\right)\cos(\pi y).
\end{equation}
It satisfies $-\phi_{0,xx}-\phi_{0,yy}/3=h_0\phi_0$, with zero conormal
derivative on all three edges. On $x+y=1$, for example,
$\sin(\pi x/3+\pi y)=\sin(2\pi x/3)$ gives
$\phi_{0,x}+\phi_{0,y}/3=0$.
Integration as in Appendix~\ref{app:integrals} gives
\begin{equation}\label{eq:equilateral-integrals}
\begin{split}
&\int_\Dlt\phi_0\dd x\dd y=0,\,\,\, M(\phi_0)=\frac34,\\
&\Ex(\phi_0)=\frac{\pi^2}{6}-\frac{81}{64},\,\,\,
\Ey(\phi_0)=\frac{\pi^2}{2}+\frac{243}{64}.
\end{split}
\end{equation}
The normalized horizontal energy is
\[
e_0=\frac{\Ex(\phi_0)}{M(\phi_0)}
=\frac{\lambda_0}{8}-\frac{27}{16}.
\]
Lemma~\ref{lem:direction} gives $e_x\le e_0$ for $t<1/3$.
This also holds at $t=1/3$, since the symmetric eigenspace is
one-dimensional. Moreover, for $t\le1/3$,
\[
\Ex(v)+t\Ey(v)-3t\left(\Ex(v)+\frac13\Ey(v)\right)
=(1-3t)\Ex(v)\ge0.
\]
Taking infima yields $h_S(t)\ge3t h_0$. Since
\[
\frac{t h_S(t)}{1+t}-e_x=\frac{t^2e_y-e_x}{1+t},
\]
it remains to show that $3t^2h_0/(1+t)>e_0$.
Writing $q=e_0/h_0=1/2-243/(64\pi^2)$, this condition is
\[
t>\tau_e:=\frac{q+\sqrt{q^2+12q}}6.
\]
To check it for $t\ge2/9$, note that $t^2/(1+t)$ is increasing and
$3t^2h_0/(1+t)=\lambda_0/33$ at $t=2/9$.
Also, $e_0<\lambda_0/33$ is equivalent to $\pi^2<8019/800$,
which follows from $\pi^2<10$. Therefore
\begin{equation}\label{eq:middle}
\frac{t h_S(t)}{1+t}
\ge\frac{3t^2}{1+t}h_0
\ge\frac{\lambda_0}{33}>e_0\ge e_x,
\,\,\, 2/9\le t\le1/3.
\end{equation}
This proves $e_x<t^2e_y$ in the remaining symmetric range.
The checks in \eqref{eq:coefficient} and \eqref{eq:middle} also show
$\tau_e<2/9<\tau_s$: the two estimates overlap, and $2/9$ is a
convenient rational endpoint rather than a geometric or spectral transition.

By \eqref{eq:small-energy}, \eqref{eq:middle}, and \eqref{eq:derivative},
$F_S'(t)>0$ for almost every $t\in(0,1/3)$.
Local Lipschitz continuity gives absolute continuity on each compact
subinterval, so integration proves strict increase up to $t=1/3$.

\textbf{Step 4: the antisymmetric class for $0<t\le1/3$.}
Set $a=\pi^2/4$. On a horizontal slice of length $\ell\le1$, the zero
trace at $x=0$ and the Dirichlet--Neumann Poincar\'e inequality give
\[
\int_0^\ell v_x^2\dd x
\ge\frac{\pi^2}{4\ell^2}\int_0^\ell v^2\dd x
\ge a\int_0^\ell v^2\dd x,
\,\,\, v\in\VA.
\]
Thus $\Ex(v)\ge aM(v)$. For the trial function $v=x$,
$M(v)=1/12$, $\Ex(v)=1/2$, and $\Ey(v)=0$, giving $h_A(t)\le6$.
Every normalized minimizer consequently satisfies
\[
e_x\ge a,\,\,\, e_y\le\frac{6-a}{t}.
\]
For $0<t_1<t_2\le1/3$, use a minimizer at $t_1$ as a trial function
at $t_2$. Applying \eqref{eq:scale} and the preceding bounds yields
\begin{align*}
F_A(t_2)-F_A(t_1)
&\le(t_2-t_1)\left(e_y-\frac{e_x}{t_1t_2}\right)\\
&\le\frac{t_2-t_1}{t_1t_2}\bigl(t_2(6-a)-a\bigr)\\
&\le\frac{t_2-t_1}{t_1t_2}\left(2-\frac{\pi^2}{3}\right)<0.
\end{align*}
Hence $F_A$ decreases strictly up to $t=1/3$.

Finally, the normalization formulas agree at $t=1/3$, and
$t=\tan^2(\theta/2)$ increases strictly with $\theta$.
Steps 1--4 prove both monotonicity assertions in Theorem~\ref{thm:main}.
At the equilateral triangle the symmetric and antisymmetric modes have
the common normalized eigenvalue $16\pi^2/9$, which is therefore the
unique minimum of the antisymmetric branch.
\end{proof}

\section{Thin-angle asymptotics}\label{sec:thin}

\quad\,  Let $h_k(t)$ be the $k$th eigenvalue of the form $\Ex+t\Ey$ on the
zero-mean space $\WS$, counted with multiplicity, so that $h_1=h_S$.
Set
\[
\Lambda_k=j_{1,k}^2,\,\,\,
C_k=\frac{\Lambda_k}{5}+\frac{\Lambda_k^2}{45},\,\,\,
t_k=\min\left\{\frac12,\frac{\pi^2}{2\Lambda_k}\right\}.
\]

\begin{proposition}\label{prop:thin-bounds}
For $0<t\le t_k$,
\begin{equation}\label{eq:thin-bounds}
\Lambda_k\left(t-\frac{t^2}{3\left(1-\Lambda_k t/\pi^2\right)}\right)
\le h_k(t)
\le\Lambda_k\left(t-\frac{t^2}{3}\right)+C_k t^3.
\end{equation}
In particular, $h_k(t)=\Lambda_k t-\Lambda_k t^2/3+O_k(t^3)$.
\end{proposition}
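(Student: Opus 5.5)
The plan is to compare $h_k(t)$ with the weighted one-dimensional problem obtained from functions of $y$ alone. With $r=1-y=\ell(y)$, a function $\psi(y)=p(r)$ has $\Ex(\psi)=0$, $\Ey(\psi)=\int_0^1 r\,p'(r)^2\dd r$ and $M(\psi)=\int_0^1 r\,p(r)^2\dd r$. The mean-zero condition becomes $\int_0^1 r\,p\dd r=0$. So the limiting problem is the radial Neumann problem on the unit disk. Its eigenfunctions are $\psi_j=J_0(j_{1,j}r)$, since $J_0'=-J_1$ gives $p'(1)=0$, and its eigenvalues are $\Lambda_j$. These $\psi_j$ are orthogonal both in the weighted $L^2$ and in the energy $\int\ell\,\psi_i'\psi_j'$. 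I prove the upper bound with an explicit $k$-dimensional trial space. I prove the lower bound by the slice decomposition of Lemma~\ref{lem:slice}, combined with the min--max principle.

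\textbf{Lower bound.} Fix a $k$-dimensional subspace $V\subset\WS$. Choose $v\in V\setminus\{0\}$ whose slice average $\phi=Pv$ is weighted-orthogonal to $\psi_1,\dots,\psi_{k-1}$; this imposes $k-1$ linear constraints. If $\phi=0$ the estimate below still applies. Then $E_\phi=\int_0^1\ell|\phi'|^2\dd y\ge\Lambda_k M(\phi)$.

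Write $v=\phi+w$ as in \eqref{eq:decomp}. By \eqref{eq:moving}, $\int_0^\ell v_y\dd x=\ell\phi'+w(\ell,y)$. Cauchy--Schwarz on each slice then gives
\[
\Ey(v)\ge\int_0^1\frac{(\ell\phi'+w(\ell,y))^2}{\ell}\dd y
\ge(1-\delta)E_\phi-\Bigl(\frac1\delta-1\Bigr)\int_0^1\frac{|w(\ell,y)|^2}{\ell}\dd y,
\]
where the second step uses Young's inequality with a parameter $\delta\in(0,1)$. Together with \eqref{eq:trace} and $\Ex(v)=\Ex(w)$ this yields
\[
\Ex(v)+t\Ey(v)\ge t(1-\delta)\Lambda_kM(\phi)+\Bigl(1-\tfrac t3\bigl(\tfrac1\delta-1\bigr)\Bigr)\Ex(w).
\]
By \eqref{eq:pw}, $\Ex(w)\ge\pi^2M(w)$, and slice orthogonality gives $M(v)=M(\phi)+M(w)$.

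Now choose $\delta=t/(3(1-\Lambda_kt/\pi^2))$, and set $\mu=t(1-\delta)\Lambda_k$, which is exactly the left side of \eqref{eq:thin-bounds}. With this choice the coefficient of $M(w)$ becomes
\[
\pi^2\Bigl(1-\tfrac{t}{3\delta}+\tfrac t3\Bigr)=\Lambda_kt+\tfrac{\pi^2t}{3}\ge\mu.
\]
Hence $\Ex(v)+t\Ey(v)\ge\mu M(v)$. The assumption $t\le t_k$ guarantees $\Lambda_kt/\pi^2\le1/2$, so $\delta\le 2t/3<1$ and the coefficient $1-\tfrac t3(\tfrac1\delta-1)$ is nonnegative. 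Min--max then gives the lower bound.

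\textbf{Upper bound.} Test with $\mathrm{span}\{\psi_j+w_j:1\le j\le k\}$. The correction $w_j$ is a slice-wise mean-zero quadratic, chosen to minimize the first correction term. For $w=c(y)(x^2-\ell^2/3)$, each slice contributes $\int_0^\ell w_x^2=4c^2\ell^3/3$ and has endpoint value $w(\ell,y)=2c\ell^2/3$. By \eqref{eq:moving}, the cross term is $2t\int\!\!\int\psi'w_y=2t\int_0^1\psi'w(\ell,y)\dd y$. Minimizing $4c^2\ell^3/3+\tfrac43 t\psi'c\ell^2$ pointwise in $y$ gives $c=-t\psi'/(2\ell)$. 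The resulting gain is
\[
-\frac{t^2}{3}\int_0^1\ell\,\psi'^2\dd y=-\frac{t^2}{3}\Lambda_jM(\psi_j).
\]
So set $w_j=-\tfrac t{2\ell}\psi_j'(x^2-\ell^2/3)$. Because $w_j$ has slice mean zero, $\int\psi_iw_j=0$. Hence the mass matrix is $\mathrm{diag}(M(\psi_j))+O(t^2)$. The energy matrix is $\mathrm{diag}\bigl((\Lambda_jt-\Lambda_jt^2/3)M(\psi_j)\bigr)+O(t^3)$: the $t^2$ cross terms between $i\ne j$ vanish because $\int\ell\psi_i'\psi_j'=0$, and the remaining pieces, $t\Ey(w_j)$ and $\Ex$ of mixed corrections, are $O(t^3)$. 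The largest generalized eigenvalue of this $k\times k$ pencil is then at most $\Lambda_k(t-t^2/3)+O(t^3)$, because $\Lambda_j\le\Lambda_k$ for $j\le k$. The mass correction $M(w_j)=O(t^2)$, multiplied by the leading $t\Lambda$, also contributes only $O(t^3)$.

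\textbf{Main obstacle.} The hard part is making the $t^3$ constant explicit and equal to $C_k=\Lambda_k/5+\Lambda_k^2/45$, rather than an unspecified $O_k(t^3)$. This requires exact Bessel integrals: $\int\ell^3\psi'^2$, $\int\ell\,\psi''^2$ and similar, coming from $\Ey(w_j)$ and $M(w_j)$. They can be reduced via the ODE $(\ell\psi')'=-\Lambda\ell\psi$ and $\psi'(0)=0$ (that is, at $\ell=1$) to multiples of $M(\psi)$, with coefficients polynomial in $\Lambda$. The same integrals must also bound the off-diagonal entries, so that a Gershgorin or Cauchy--Schwarz argument on the pencil gives the stated constant. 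The expansion $h_k(t)=\Lambda_kt-\Lambda_kt^2/3+O_k(t^3)$ then follows by expanding the lower bound, $t^2/(3(1-\Lambda_kt/\pi^2))=t^2/3+O(t^3)$.
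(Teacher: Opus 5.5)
Your lower bound is correct, and it is organized differently from the paper's. The paper takes the span $\mathcal E_k(t)$ of the first $k$ eigenfunctions and completes a square with coefficient $1-H/\pi^2$. It needs the a priori bound $H\le\Lambda_kt\le\pi^2/2$ from \eqref{eq:unadjusted} to make the slice-average map injective on $\mathcal E_k(t)$. It then applies the one-dimensional min--max principle to get $H\ge\Lambda_k\bigl(t-t^2/(3(1-H/\pi^2))\bigr)$, and only at the end replaces $H$ by $\Lambda_kt$. You instead work in an arbitrary $k$-dimensional $V\subset\WS$ and impose $k-1$ orthogonality conditions on the slice average. You keep the term $\abs{w(\ell,y)}^2/\ell$ from the slice Cauchy--Schwarz and tune Young's parameter to $\Lambda_kt$ from the outset. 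The $M(w)$-coefficient is then $\Lambda_kt+\pi^2t/3\ge\mu$, as you say. This removes both the injectivity argument and any a priori control of $H$, so your lower bound is independent of the upper bound.

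The gap is in the upper bound. The statement asserts it with the explicit constant $C_k=\Lambda_k/5+\Lambda_k^2/45$, but your argument yields only an unspecified $O_k(t^3)$ and leaves the constant open. That is enough for the final expansion but not for \eqref{eq:thin-bounds}. The per-mode route you sketch does not close it on its own. In the radial variable, the off-diagonal entries are $\Ey(w_i,w_j)=t^2\frac{\Lambda_i\Lambda_j}{45}\int_0^1r^3F_iF_j\dd r$, which need not vanish, and it is unclear that a Gershgorin-type row sum lands at or below $C_k$.

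The missing step is to treat a general $F$ in the span $\mathcal B_k$ of the first $k$ radial modes rather than a basis. Your corrector is $t\chi_F$ with $\chi_F=\frac{F'(r)}{2r}\bigl(x^2-\frac{r^2}{3}\bigr)$. With $v_F=F(r)+t\chi_F$ one has exactly
\[
\Ex(v_F)+t\Ey(v_F)=\Bigl(t-\frac{t^2}{3}\Bigr)E_F+t^3\Ey(\chi_F),\qquad M(v_F)\ge M_F .
\]
So there is no coupling at orders $t$ and $t^2$, and the mass correction can simply be dropped. The remaining coupling is handled by the identity
\[
\Ey(\chi_F)=\frac15E_F+\frac1{45}\int_0^1r^3(\mathcal LF)^2\dd r,\qquad \mathcal LF=-F''-\frac{F'}{r}.
\]
It follows from $rF''-F'=-r\mathcal LF-2F'$ and the vanishing of $\int_0^1r^2(\mathcal LF)F'\dd r=-\frac12\bigl[r^2F'^2\bigr]_0^1$. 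Expanding $F$ in the orthonormal modes then gives the quadratic-form bounds $E_F\le\Lambda_kM_F$ and $\int_0^1r^3(\mathcal LF)^2\dd r\le\int_0^1r(\mathcal LF)^2\dd r\le\Lambda_k^2M_F$. Hence $\Ey(\chi_F)\le C_kM_F$ uniformly on the span, which gives the stated upper bound.
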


Functions that are constant on each horizontal slice give the
leading term. We first identify their one-dimensional eigenvalue
problem. We then use slice averages for the lower bound and add a
quadratic correction across each slice for the upper bound.

\begin{proof}[Proof of Proposition~\ref{prop:thin-bounds}]
For functions on $(0,1)$, define
\[
M_F=\int_0^1rF(r)^2\dd r,\,\,\,
E_F=\int_0^1rF'(r)^2\dd r.
\]
The weight $r$ is the length of the slice at height $y=1-r$.
The relevant form space consists of $F\in H^1_{\mathrm{loc}}(0,1)$ with
$M_F+E_F<+\infty$
and $\int_0^1rF(r)\dd r=0$. Its positive variational eigenvalues are
$\Lambda_k$. To see this, identify $F$ with a radial function on the
unit disk: its squared norm and energy are $2\pi M_F$ and $2\pi E_F$.
The radial equation and outer boundary condition are
\[
-(rF')'=\Lambda rF,\,\,\, F'(1)=0.
\]
For $\Lambda>0$, the solutions are combinations of
$J_0(\sqrt\Lambda\,r)$ and $Y_0(\sqrt\Lambda\,r)$.
The second solution has derivative of order $1/r$ near zero and hence
infinite weighted energy. The finite-energy solution is therefore a
multiple of $J_0(\sqrt\Lambda\,r)$. Since $J_0'=-J_1$, the outer
boundary condition gives $\Lambda=\Lambda_k$.
Compactness and the min--max principle follow from the radial subspace
of $H^1$ on the disk.

Let $\mathcal B_k$ be the span of the first $k$ positive radial modes.
Choose these modes $F_1,\ldots,F_k$ orthonormal in
$L^2\bigl((0,1),r\dd r\bigr)$.
\rev{Each such mode has zero weighted mean: integrating its eigenvalue
equation and using $F_j'(1)=0$ and
$rF_j'(r)\to0$ as $r\downarrow0$ gives
$\Lambda_j\int_0^1rF_j(r)\dd r=0$.}
For $F=\sum_{j=1}^k c_jF_j$, orthogonality
and the eigenvalue equation give
\[
M_F=\sum_{j=1}^k c_j^2,
\,\,\, E_F=\sum_{j=1}^k\Lambda_jc_j^2\le\Lambda_kM_F.
\]
The lifted function $v(x,y)=F(1-y)$ satisfies
\[
\int_\Dlt v\dd x\dd y=\int_0^1rF(r)\dd r=0,
\,\,\, M(v)=M_F,\,\,\, \Ex(v)=0,\,\,\, \Ey(v)=E_F.
\]
Thus $\mathcal U_k=\{F(1-y):F\in\mathcal B_k\}$ is contained in
$\WS$. The lift is injective, so $\dim\mathcal U_k=k$. Hence
\rev{\begin{equation}\label{eq:unadjusted}
\begin{split}
h_k(t)
&=\min_{\substack{\mathcal U\subset\WS\\\dim\mathcal U=k}}
  \max_{v\in\mathcal U\setminus\{0\}}
  \frac{\Ex(v)+t\Ey(v)}{M(v)}\\
&\le\max_{F\in\mathcal B_k\setminus\{0\}}\frac{tE_F}{M_F}
=t\Lambda_k.
\end{split}
\end{equation}}

We now project onto slice averages to obtain the lower bound.
Write $H=h_k(t)$, and let $\mathcal E_k(t)$ be the span of $k$
orthonormal eigenfunctions for the first $k$ eigenvalues on $\WS$.
For $v\in\mathcal E_k(t)$,
\[
\Ex(v)+t\Ey(v)\le HM(v).
\]
Decompose $v=\phi+w$ as in \eqref{eq:decomp}. Then
$M(v)=M(\phi)+M(w)$ and $\Ex(v)=\Ex(w)$. Expanding the vertical
energy and using \eqref{eq:moving} gives
\[
\Ey(v)=E_\phi+2\int_0^1\phi'(y)w(\ell,y)\dd y+\Ey(w).
\]
By \eqref{eq:unadjusted} and the definition of $t_k$, $H\le\pi^2/2$,
so $1-H/\pi^2\ge1/2$. The bounds \eqref{eq:pw} and \eqref{eq:trace}
apply to every $v\in H^1$. Dropping $\Ey(w)$ and completing a square
in $\sqrt{\Ex(w)}$ therefore yields
\begin{align*}
\Ex(v)+t\Ey(v)-HM(v)
&\ge\left(1-\frac H{\pi^2}\right)\Ex(w)
 -2t\sqrt{E_\phi\Ex(w)/3}+tE_\phi-HM(\phi)\\
&\ge\left(t-\frac{t^2}{3\left(1-H/\pi^2\right)}\right)E_\phi
 -HM(\phi).
\end{align*}
Since $t\le1/2$, the coefficient of $E_\phi$ is positive:
\[
t-\frac{t^2}{3\left(1-H/\pi^2\right)}
\ge t\left(1-\frac{2t}{3}\right)\ge\frac{2t}{3}>0.
\]
The average map is injective on $\mathcal E_k(t)$. Indeed, if $\phi=0$,
then \eqref{eq:pw} implies $\Ex(v)\ge\pi^2 M(v)$, whereas
$\Ex(v)+t\Ey(v)\le HM(v)$ with $H<\pi^2$. Thus $v=0$.
Its image is consequently a $k$-dimensional weighted zero-mean space.
\rev{More precisely, setting $F(r)=\phi(1-r)$ identifies this image
with a subspace of the one-dimensional form domain, with
$M_F=M(\phi)$ and $E_F=E_\phi$.}
Since $\Ex(v)+t\Ey(v)-HM(v)\le0$,
the maximum of $E_\phi/M(\phi)$ over this image is at most
$H/\bigl(t-t^2/(3(1-H/\pi^2))\bigr)$.
The weighted min--max principle therefore gives
\[
H\ge\Lambda_k\left(t-\frac{t^2}{3\left(1-H/\pi^2\right)}\right)
\ge\Lambda_k\left(t-\frac{t^2}{3\left(1-\Lambda_k t/\pi^2\right)}\right).
\]
This proves the lower bound in \eqref{eq:thin-bounds}.

For the upper bound, we correct the trial function $F(1-y)$
to allow small horizontal variation. Fix $F\in\mathcal B_k$.
To find its first correction, write $r=1-y$ and try
$v=F(r)+t\chi(x,y)$, where $\chi\in H^1(\Dlt)$ has zero mean
on every slice. The coefficient of $t^2$ in the energy is
\[
Q_F(\chi)=\Ex(\chi)
+2\int_\Dlt\partial_yF(r)\,\chi_y\dd x\dd y.
\]
To compute the mixed term, differentiate the zero slice mean,
using $r=1-y$:
\[
0=\frac{\mathrm{d}}{\mathrm{d}y}\int_0^{1-y}\chi(x,y)\dd x
=\int_0^r\chi_y(x,y)\dd x-\chi(r,y).
\]
Integration by parts in $x$ also gives
\[
\int_0^r x\chi_x(x,y)\dd x
=r\chi(r,y)-\int_0^r\chi(x,y)\dd x=r\chi(r,y).
\]
These identities hold in the weak sense for $\chi\in H^1(\Dlt)$,
as in \eqref{eq:moving}. Since $\partial_yF(r)=-F'(r)$, they yield
\begin{align*}
\int_\Dlt\partial_yF(r)\,\chi_y\dd x\dd y
&=-\int_0^1F'(r)\chi(r,1-r)\dd r,\\
\chi(r,1-r)&=\frac1r\int_0^r x\chi_x(x,1-r)\dd x.
\end{align*}
Substituting the endpoint formula and completing the square gives
\begin{align*}
Q_F(\chi)
&=\int_0^1\int_0^r
\left(\chi_x(x,1-r)^2
-2\frac{x}{r}F'(r)\chi_x(x,1-r)\right)\dd x\dd r\\
&=\int_0^1\int_0^r
\left(\chi_x(x,1-r)-\frac{x}{r}F'(r)\right)^2\dd x\dd r
-\int_0^1\frac{F'(r)^2}{r^2}\int_0^r x^2\dd x\dd r\\
&=\int_0^1\int_0^r
\left(\chi_x(x,1-r)-\frac{x}{r}F'(r)\right)^2\dd x\dd r
-\frac{E_F}{3},
\end{align*}
because $\int_0^r x^2\dd x=r^3/3$ and
$E_F=\int_0^1rF'(r)^2\dd r$.

The minimum is $-E_F/3$, attained when $\chi_x=xF'(r)/r$.
This accounts for the coefficient $-1/3$ in
Proposition~\ref{prop:thin-bounds}. Integrating in $x$ and imposing
zero slice mean gives
\begin{equation}\label{eq:corrector-definition}
\chi_F(x,y)=\frac{F'(r)}{2r}\left(x^2-\frac{r^2}{3}\right),
\,\,\, v_F(x,y)=F(r)+t\chi_F(x,y).
\end{equation}

The ratio $F'(r)/r$
is regular at zero because every function in this finite Bessel span
is an even analytic function of $r$. Hence $v_F\in H^1(\Dlt)$.
Its slice average is $F(r)$, so $v_F\in\WS$ and the map
$F\mapsto v_F$ is injective.

For its mass, use slice orthogonality
and substitute $z=x/r$; the integral
$\int_0^1(z^2-1/3)^2\dd z=4/45$ gives $M(\chi_F)$ below.
The energy identities follow from $\partial_x\chi_F=xF'(r)/r$
and the moving-endpoint calculation above. Thus
\begin{equation}\label{eq:corrector}
\begin{split}
M(v_F)&=M_F+t^2M(\chi_F),\,\,\,
M(\chi_F)=\frac1{45}\int_0^1r^3F'(r)^2\dd r,\\
\Ex(\chi_F)&=\frac{E_F}{3},\,\,\,
\int_\Dlt\partial_yF(r)\,\partial_y\chi_F\dd x\dd y
=-\frac{E_F}{3}.
\end{split}
\end{equation}

\rev{Since $r^3\le r$ on $(0,1)$,
$M(\chi_F)\le E_F/45\le\Lambda_k M_F/45$.
Thus the relative mass correction is $O_k(t^2)$ and, because the
leading energy is of order $t$, affects the Rayleigh quotient only
at order $t^3$.}

To control the remaining energy, put $\mathcal LF=-F''-F'/r$.
Differentiating \eqref{eq:corrector-definition} at fixed $x$ gives
\[
\partial_y\chi_F
=-\frac12\left(rF''-F'\right)\left(z^2-\frac13\right)
+\frac{F'}3,\,\,\, z=\frac xr.
\]
Since $\int_0^1(z^2-1/3)\dd z=0$, integration in $z$ yields
\[
\Ey(\chi_F)=\int_0^1r\left(
\frac{(rF''-F')^2}{45}+\frac{F'^2}{9}\right)\dd r.
\]
Use $rF''-F'=-r\mathcal LF-2F'$ to expand the square.
Regularity at zero and $F'(1)=0$ make the integrated boundary term
vanish, so the cross term satisfies
\[
\int_0^1r^2(\mathcal LF)F'\dd r
=-\int_0^1\bigl(r^2F''F'+rF'^2\bigr)\dd r
=-\frac12\left[r^2F'^2\right]_0^1=0.
\]

To bound the remaining terms, expand $F$ in the weighted orthonormal
radial modes. Their eigenvalues are at most $\Lambda_k$, so
\[
E_F\le\Lambda_kM_F,\,\,\,
\int_0^1r^3(\mathcal LF)^2\dd r
\le\int_0^1r(\mathcal LF)^2\dd r\le\Lambda_k^2M_F.
\]
Substitution into the expanded square gives
\begin{equation}\label{eq:corrector-y}
\Ey(\chi_F)=\frac15E_F
+\frac1{45}\int_0^1r^3(\mathcal LF)^2\dd r\le C_k M_F.
\end{equation}

Because $t-t^2/3>0$ for $t\le t_k$, we can apply
$E_F\le\Lambda_kM_F$ together with \eqref{eq:corrector} and
\eqref{eq:corrector-y} to obtain
\[
\Ex(v_F)+t\Ey(v_F)
=\left(t-\frac{t^2}{3}\right)E_F+t^3\Ey(\chi_F)
\le\left[\Lambda_k\left(t-\frac{t^2}{3}\right)+C_k t^3\right]M_F.
\]
Since $M(v_F)\ge M_F$,
the min--max principle on the $k$-dimensional space
$\{v_F:F\in\mathcal B_k\}$ proves the upper bound in
\eqref{eq:thin-bounds}\rev{.}
\rev{Since $\Lambda_k t/\pi^2\le1/2$, the two bounds also give
\[
-\frac{2\Lambda_k^2}{3\pi^2}t^3
\le h_k(t)-\Lambda_k\left(t-\frac{t^2}{3}\right)
\le C_k t^3,
\]
which proves the stated expansion.}
\renewcommand{\qedsymbol}{$\square$}
\end{proof}

Proposition~\ref{prop:thin-bounds} gives the expansion within the
symmetric class. To deduce Theorem~\ref{thm:thin}, we must first identify
these eigenvalues with the first positive eigenvalues of the full triangle
for small $t$, and then express the expansion in terms of the angle
$\theta$.

\begin{proof}[Proof of Theorem~\ref{thm:thin}]
Reflection in the axis commutes with the Neumann Laplacian, so the
full spectrum is the union, with multiplicities, of the two symmetry
spectra. The lowest antisymmetric eigenvalue satisfies
\[
\mu_A(T_\theta)\ge\frac{(1+t)\pi^2}{4t}\longrightarrow+\infty
\quad\text{as }t\downarrow0,
\]
whereas \eqref{eq:unadjusted} bounds the $k$th positive symmetric
eigenvalue by $(1+t)\Lambda_k$. Therefore, if
$t<\pi^2/(4\Lambda_k)$, the first $k$ positive eigenvalues of the full
triangle all belong to the symmetric class. For sufficiently small $t$,
$D_\theta=1$, and Proposition~\ref{prop:thin-bounds} implies
\[
\mu_{k+1}(T_\theta)D_\theta^2
=\frac{1+t}{t}h_k(t)
=\Lambda_k\left(1+\frac23t\right)+O_k(t^2).
\]
The energy correction $-t^2/3$ and the scaling factor $(1+t)/t$
have thus produced the coefficient $2/3$ of $t$.
Substituting $t=\tan^2(\theta/2)=\theta^2/4+O(\theta^4)$ gives the
coefficient $1/6$ of $\theta^2$ and the stated $O_k(\theta^4)$ remainder.
\end{proof}

\appendix
\section{Equilateral integration formulas}\label{app:integrals}

\quad\, The data in \eqref{eq:equilateral-integrals} can be checked using
\[
C(a,b)=\int_0^1\int_0^{1-x}
\cos(a\pi x)\cos(b\pi y)\dd y\dd x.
\]
Integration in $y$ followed by a product-to-sum identity gives
\[
C(a,b)=\frac{\cos(b\pi)-\cos(a\pi)}{\pi^2(a^2-b^2)}
\quad\text{if }a^2\ne b^2,
\]
with the values at $a=\pm b$ obtained by continuity. In particular,
$C(0,0)=1/2$. The values needed here are
\[
\begin{array}{c|c@{\,\,\,}c|c}
(a,b)&C(a,b)&(a,b)&C(a,b)\\\hline
(0,0)&1/2&(0,2)&0\\[2pt]
(2/3,0)&27/(8\pi^2)&(2/3,2)&-27/(64\pi^2)\\[2pt]
(4/3,0)&27/(32\pi^2)&(1,1)&0\\[2pt]
(1/3,1)&27/(16\pi^2)&&
\end{array}
\]
From \eqref{eq:equilateral},
\[
\int_\Dlt\phi_0\dd x\dd y=C(2/3,0)-2C(1/3,1)=0.
\]
Expanding the square and applying product-to-sum identities gives
\begin{align*}
M(\phi_0)
&=\frac32C(0,0)+\frac12C(4/3,0)
-2C(1,1)-2C(1/3,1)\\
&\hspace{1.5em}+C(2/3,0)+C(0,2)+C(2/3,2)=\frac34.
\end{align*}
Similarly,
\begin{align*}
\phi_{0,x}
&=\frac{2\pi}{3}\left(-\sin\frac{2\pi x}{3}
+\sin\frac{\pi x}{3}\cos(\pi y)\right),\\
\phi_{0,y}&=2\pi\cos\frac{\pi x}{3}\sin(\pi y).
\end{align*}
Therefore
\begin{align*}
\Ex(\phi_0)
&=\frac{4\pi^2}{9}\left(\frac34C(0,0)-\frac12C(4/3,0)
-C(1/3,1)+C(1,1)\right.\\
&\hspace{6em}\left.-\frac14C(2/3,0)+\frac14C(0,2)
-\frac14C(2/3,2)\right)\\
&=\frac{4\pi^2}{9}\left(\frac38-\frac{729}{256\pi^2}\right)
=\frac{\pi^2}{6}-\frac{81}{64},\\[4pt]
\Ey(\phi_0)
&=\pi^2\bigl(C(0,0)+C(2/3,0)-C(0,2)-C(2/3,2)\bigr)\\
&=\frac{\pi^2}{2}+\frac{243}{64}.
\end{align*}
As a check, these values satisfy
$\Ex(\phi_0)+\Ey(\phi_0)/3=(4\pi^2/9)M(\phi_0)$.
\\ \\
\textbf{The conflicts of interest statement and Data Availability statement.}
\bigskip\\
\indent There is not any conflict of interest.
Data sharing not applicable to this article as no datasets were generated or analysed during the current study.
\\\\
\textbf{Declaration of AI-assisted writing.}
\bigskip\\
\indent During the preparation of this work, the authors used ChatGPT and DeepSeek for language polishing, grammar checking and refining the exposition of certain technical arguments. After using these tools, the authors carefully reviewed and edited the content as needed and take full responsibility for the final version of this manuscript.


\begin{thebibliography}{9}

\bibitem{LSmin}
R. S. Laugesen and B. A. Siudeja,
\emph{Minimizing Neumann fundamental tones of triangles: an optimal
Poincar\'e inequality}, J. Differential Equations \textbf{249} (2010),
118--135.

\bibitem{LSmax}
R. S. Laugesen and B. A. Siudeja,
\emph{Maximizing Neumann fundamental tones of triangles},
J. Math. Phys. \textbf{50} (2009), 112903.

\bibitem{LSchapter}
R. S. Laugesen and B. A. Siudeja,
\emph{Triangles and other special domains},
in A. Henrot (ed.), \emph{Shape Optimization and Spectral Theory},
De Gruyter Open, 2017, pp. 149--200.

\bibitem{GHLZ}
C. Gui, Y. Hu, Q. Li and C. Zhang,
\emph{Mixed torsion on right triangles and the P\'olya--Szeg\H{o}
monotonicity problem for regular polygons}, preprint, 2026.
\rev{\href{https://arxiv.org/abs/2606.13448}{arXiv:2606.13448}.}

\bibitem{CWY}
H.-B. Chen, K. Wu and R. Yao,
\emph{Monotone properties of the second even Neumann eigenfunction in
symmetric domains}, Ann. Mat. Pura Appl. \textbf{205} (2026), 635--657.

\bibitem{HM}
A. Henrot and M. Michetti,
\emph{A comparison between Neumann and Steklov eigenvalues},
J. Spectr. Theory \textbf{12} (2022), no. 4, 1405--1442.

\end{thebibliography}
\end{document}